\documentclass[12pt,letterpaper]{amsart}
\usepackage{geometry,graphicx}
\geometry{body={6.7in,9in}, centering}
\usepackage{setspace}
\usepackage{mathpazo} 
\usepackage[colorlinks,allcolors=blue]{hyperref} 
\usepackage{xpatch}
\xpatchcmd{\proof}
{\itshape}
{\bfseries}
{}
{}
\newtheorem{theorem}{Theorem}
\newtheorem{proposition}{Proposition}
\newtheorem{lemma}{Lemma}
\newtheorem{corollary}{Corollary}
\newtheorem{definition}{Definition}
\theoremstyle{remark}
\newtheorem{remark}{Remark}

\newcommand{\C}{\mathbb{C}}
\newcommand{\D}{\Omega}

\newcommand{\ep}{\varepsilon}
\newcommand{\Ha}{\mathbb{H}}

\newcommand{\zb}{\overline{z}}

\makeatletter
\@namedef{subjclassname@2020}{%
\textup{2020} Mathematics Subject Classification}
\makeatother

\onehalfspace

\title{A sufficient condition for $L^p$ regularity of the Berezin transform} 

\author{N\.{i}hat G\"{o}khan G\"{o}\u{g}\"{u}\c{s}}
\address[N\.{i}hat G\"{o}khan G\"{o}\u{g}\"{u}\c{s}]{Sabanc\i{} University,
Tuzla, 34956, Istanbul, Turkey}
\email{nggogus@sabanciuniv.edu}

\author{S\"{o}nmez \c{S}ahuto\u{g}lu}
\address[S\"{o}nmez \c{S}ahuto\u{g}lu]{University of Toledo, Department of
Mathematics \& Statistics, Toledo, OH 43606, USA}
\email{sonmez.sahutoglu@utoledo.edu}

\subjclass[2020]{Primary 32A25; Secondary 47G35}
\keywords{Bergman Kernel, Berezin Transform, Hartogs triangle, sharp $\mathcal{B}$-type}

\date{\today}
\begin{document}

\begin{abstract}
We prove that the Berezin transform is $L^p$ regular on a large class of domains 
in $\mathbb{C}^n$ and not $L^2$ regular on the Hartogs triangle. 
\end{abstract}

\maketitle

Let $\D$ be a domain in $\C^n$ with boundary $b\D$. We denote the 
Bergman space by $A^2(\D)$ and the Bergman projection, the orthogonal 
projection, by $P_{\D}:L^2(\D)\to A^2(\D)$. The Bergman Kernel of $\D$ 
is denoted by $K_{\D}:\D\times \D\to \C$ and, in case $K_{\D}(z,z)\neq 0$ for all $z\in \D$, 
\[k_z^{\D}(w)=\frac{K_{\D}(w,z)}{\sqrt{K_{\D}(z,z)}}\] 
is called the normalized Bergman kernel as $\|k_z^{\D}\|_{L^2}=1$ for all $z\in \D$ 
where $\|.\|_{L^p}$ denotes the $L^p$ norm on $\D$. 
In this paper, $K_{\D}(z,z)\neq 0$ for all $z\in \D$ will be a running assumption. 

We note that Boas in \cite{Boas86} observed that the monomial $z_1^jz_2^k$ is square-integrable 
on  the unbounded logarithmically convex Reinhardt domain 
$\mathcal{B}=\{(z_1,z_2)\in \C^2:|z_2|<(1+|z_1|)^{-1}\}$ if and only if $0 \leq j < k$. 
Hence  $K_{\mathcal{B}}(0,0)=0$. Later Engli\v{s} in \cite{Englis07} constructed the unbounded 
pseudoconvex complete Reinhardt domain 
$\mathcal{E}=\{(z_1,z_2,z_3)\in \C^3:|z_3|^2(|z_1|^2+|z_2|^2)^2<1,|z_1|^2+|z_2|^2<1\}$ such that  
$A^2(\mathcal{E})$ is infinite dimensional, yet $|K_{\mathcal{E}}(w, z)|^2 /K_{\mathcal{E}}(z,z)$ 
is discontinuous at $z=0$ for each fixed $w\neq 0$. Moreover, he showed that the Berezin 
transform (defined below) of $|z_1|^2$, a smooth bounded function on $\mathcal{E}$, is 
discontinuous at the origin. 

Next we will define the Berezin transform of a bounded operator $T:A^2(\D)\to A^2(\D)$. 
The Berezin transform $B_{\D}T(z)$ of $T$ at $z\in \D$ is defined as 
\[B_{\D}T(z)=\langle Tk^{\D}_z,k^{\D}_z\rangle\]
where  $\langle .,.\rangle$ denotes the inner product on $L^2(\D)$. 
For a function $\phi\in L^{\infty}(\D)$ we define its Berezin transform as 
\[B_{\D}\phi(z)=\langle T_{\phi}k^{\D}_z,k^{\D}_z\rangle\]
where $T_{\phi}:A^2(\D)\to A^2(\D)$ is the Toeplitz operator with symbol $\phi$. 
Namely, $T_{\phi}f=P_{\D}(\phi f)$.  We note that 
$B_{\D}\phi(z)=\int \phi(w)|k^{\D}_z(w)|^2dV(w)$ for $\phi\in L^{\infty}(\D)$. 

The Berezin transform has been an important tool in operator theory. For instance, 
the Axler-Zheng theorem and its various extensions establish compactness of operators 
using boundary values of the Berezin transform (see, for instance, 
\cite{AxlerZheng98,Suarez07,Englis99,CuckovicSahutoglu13,CuckovicSahutogluZeytuncu18}).  
The Berezin transform is also related to Schatten norm of Hankel (and Toeplitz) operators (see 
\cite{Zhu91,Li93,GogusSahutoglu18}). We refer the reader to \cite{ZhuBook} for more 
details about the Berezin transform.   

The fact that $\|k_z^{\D}\|_{L^2}=1$ for all $z\in \D$ implies that $\|B_{\D}\|_{L^{\infty}}\leq 1$ 
on any domain $\D$ with the property that $K_{\D}(z,z)\neq 0$ for all $z\in \D$. Furthermore, 
$\|B_{\D}\|_{L^{\infty}}=1$ if $\D$ is bounded. Even though $B_{\D}$ is defined on $L^{\infty}(\D)$, 
on some domains, such as the unit ball and the polydisc, it extends as a bounded map on 
$L^p(\D)$ for some $p<\infty$.  In this article, we want to know on what domains this is the case.  
This problem has been studied in few cases. Dostani\'c  in \cite{Dostanic08} showed that the 
Berezin transform is $L^p$ regular on the unit disc and computed the norm 
$\|B_{\mathbb{D}}\|_{L^p}=\pi(p+1)/(p^2\sin(\pi/p))$ for $1<p\leq \infty$. 
Later Liu and Zhou in \cite{LiuZhou12} and Markovi\'c in \cite{Markovic15} generalized 
Dostani\'c's result to the unit ball. On the bidisc $\mathbb{D}^2$, Lee in \cite[Proposition 3.2]{Lee97} 
showed that the Berezin transform is bounded on $L^p(\mathbb{D}^2)$ for $p>1$ and 
unbounded on $L^1(\mathbb{D}^2)$. In this article we are interested in investigating $L^p$ 
regularity of the Berezin transform on domains in $\C^n$. However, we will 
not attempt to compute its norm.

Now we define the class of domains on which we will prove $L^p$ regularity of the 
Berezin transform.  

\begin{definition}
Let $\D$ and $K_{\D}$ be a  domain in $\C^n$ and the Bergman kernel of $\D$, 
respectively. We say that $\D$ satisfies property BR if $K_{\D}(z,z)\neq 0$ for all $z\in \D$ and 
\[\sup\left\{\frac{|K_{\D}(w,z)|}{K_{\D}(z,z)}:z,w\in \D\right\}<\infty.\]
\end{definition}

The infimum of $\left\{\frac{|K_{\D}(w,z)|}{K_{\D}(z,z)}:z,w\in \D\right\}$ is 0  
whenever $\D$ is a bounded domain in $\C^n$. One can see this as follows: 
using a holomorphic affine change of coordinates, if necessary,  we assume that 
$\D\subset \{z\in \C^n:\text{Re}(z_n)>0\}$ and $0\in b\D$. Next we define 
$f(z)=\log(z_n)$. Then $|f(z)|\to\infty$ as $z\to 0$ and one can show that $f\in A^2(\D)$. 
That is, $K_{\D}(z,z)\to \infty$ as $z\to 0$. Hence for a fixed $w\in \D$ we have 
\[\frac{|K_{\D}(w,z)|}{K_{\D}(z,z)}\leq \frac{\sqrt{K_{\D}(z,z)}\sqrt{K_{\D}(w,w)}}{K_{\D}(z,z)}
= \frac{\sqrt{K_{\D}(w,w)}}{\sqrt{K_{\D}(z,z)}}\to 0 \text{ as } z\to 0.\]
Therefore,  $\inf\left\{\frac{|K_{\D}(w,z)|}{K_{\D}(z,z)}:z,w\in \D\right\}=0$.

Property BR is a biholomorphic invariant in the following sense: 
Assume that $\D_1$ satisfies property BR and $F:\D_1\to \D_2$ is a biholomorphism 
with the property that there exists $1<C<\infty$ such that $C^{-1}<|\det J_{\C}F(z)|<C$ for all $z\in \D_1$ 
where $J_{\C}F$ denotes the holomorphic Jacobian matrix of $F$. Then $\D_2$ satisfies 
property BR. One can easily see this using the  transformation formula for the 
Bergman kernel (see, for instance, \cite[Proposition 1.4.12]{KrantzBook}, 
\cite[Proposition 12.1.10]{JarnickiPflugBook1Ed2}, \cite[Theorem 4.9]{RangeBook}).  

Domains with property BR are not necessarily bounded or pseudoconvex. 
For example, one can show that the upper half plane in $\mathbb{C}$ 
and $\{z\in \C^2:1<|z|<2\}$, a bounded non-pseudoconvex domain, have property BR. 
On the other hand, not every pseudoconvex domain satisfies property BR. 
For instance, the Hartogs triangle, 
\[\mathbb{H}=\{(z_1,z_2)\in\C^2:|z_2|<|z_1|<1\},\]
 does not satisfy property BR. One can show this as follows. Using the fact that 
$(\mathbb{D}\setminus\{0\}) \times \mathbb{D}$ is biholomorphic to $\mathbb{H}$   
one can compute that the Bergman kernel of $\mathbb{H}$  
(see, for instance, \cite{Edholm16,EdholmMcNeal16})
\[K_{\mathbb{H}}(w,z) 
 =\frac{w_1\zb_1}{\pi^2(w_1\zb_1-w_2\zb_2)^2(1-w_1\zb_1)^2}.\] 
Then for $0<\delta,\ep<1$ we have 
\[\frac{|K_{\mathbb{H}}((\ep,0),(\delta,0))|}{K_{\mathbb{H}}((\delta,0),(\delta,0))} 
=\frac{\delta(1-\delta^2)^2}{\ep(1-\delta\ep)^2}.\]
Now if we fix $\delta$ and let $\ep\to 0$ we see that the ratio 
$|K_{\mathbb{H}}((\ep,0),(\delta,0))|/K_{\mathbb{H}}((\delta,0),(\delta,0))\to \infty$. 
Hence $\mathbb{H}$ does not satisfy property BR. 

Domains satisfying property BR include the domains with sharp $\mathcal{B}$-type 
Bergman kernels (see Lemma \ref{LemKernelEst}) and their cross products such as the 
polydisc (see Corollary \ref{CorKernelEst}). Since it is technical, we will not give  a precise 
definition of domains with sharp $\mathcal{B}$-type Bergman kernels and we refer the reader to 
\cite{KhanhLiuThuc19}. However, we note that domains with sharp $\mathcal{B}$-type 
Bergman kernels form a large class of domains in $\C^n$  including: strongly 
pseudoconvex domains (such as the unit ball), pseudoconvex finite type domains 
in $\C^2$, finite type convex domains,  decoupled domains of finite type, and 
pseudoconvex domains of finite type whose Levi-forms have only one degenerate 
eigenvalue or comparable eigenvalues. Domains with sharp $\mathcal{B}$-type 
Bergman kernels were first introduced by McNeal and Stein in \cite{McNealStein94} 
in which they prove that if the Bergman kernel of a finite type convex domain is of 
$\mathcal{B}$-type, then the absolute Bergman projection, defined below, is bounded 
on $L^p$ for $1<p<\infty.$ 

The absolute Bergman projection $P^+_{\D}$ of $\D$ is defined as 
\[P^+_{\D}f(z)=\int_{\D}|K_{\D}(z,w)|f(w)dV(w).\]

In the proposition below, we show that $L^p$ regularity of the absolute Bergman projection 
implies $L^p$ regularity of the Berezin transform for domains with property BR. The main 
reason the absolute Bergman projection has been studied is the following simple fact: 
$L^p$-regularity of the absolute Bergman projection implies the $L^p$-regularity of 
the Bergman projection. For an incomplete list we refer the reader to 
\cite{PhongStein77,McNealStein94,McNeal94b,LanzaniStein12,EdholmMcNeal16,Huo18,
HuoWagnerWick21} and references therein. 

\begin{proposition}\label{Prop1}
Let $\D$ be a domain in $\C^n$ satisfying property BR and  the absolute 
Bergman projection $P^+_{\D}:L^{p_0}(\D)\to L^{p_0}(\D)$ is bounded for some 
$1< p_0<\infty$. Then $B_{\D}:L^p(\D)\to L^p(\D)$ is bounded for all 
$p_0\leq p\leq \infty$ and 
\[\|B_{\D}\|_{L^{p_0}} 
\leq \sup\left\{\frac{|K_{\D}(w,z)|}{K_{\D}(z,z)}:z,w\in \D \right\} \|P^+_{\D}\|_{L^{p_0}}.\]
\end{proposition}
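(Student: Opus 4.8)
The plan is to realize the Berezin transform as an integral operator whose nonnegative kernel is pointwise dominated by a constant multiple of the absolute Bergman kernel, read off the $L^{p_0}$ bound from this domination, and then upgrade to the full range via interpolation with the trivial $L^{\infty}$ estimate.

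First I would write the Berezin transform explicitly as an integral operator. Using $B_{\D}\phi(z)=\int_{\D}\phi(w)|k_z^{\D}(w)|^2\,dV(w)$ together with $|k_z^{\D}(w)|^2=|K_{\D}(w,z)|^2/K_{\D}(z,z)$ (recall $K_{\D}(z,z)>0$), the transform $B_{\D}$ is the integral operator with nonnegative kernel $|K_{\D}(w,z)|^2/K_{\D}(z,z)$. The heart of the argument is to factor this kernel and invoke property BR. Writing $M:=\sup\{|K_{\D}(w,z)|/K_{\D}(z,z):z,w\in\D\}<\infty$ and using $|K_{\D}(w,z)|=|K_{\D}(z,w)|$, I get
\[
\frac{|K_{\D}(w,z)|^2}{K_{\D}(z,z)}=\frac{|K_{\D}(w,z)|}{K_{\D}(z,z)}\,|K_{\D}(w,z)|\leq M\,|K_{\D}(z,w)|.
\]
Consequently, for any $\phi$,
\[
|B_{\D}\phi(z)|\leq \int_{\D}\frac{|K_{\D}(w,z)|^2}{K_{\D}(z,z)}\,|\phi(w)|\,dV(w)\leq M\int_{\D}|K_{\D}(z,w)|\,|\phi(w)|\,dV(w)=M\,P^+_{\D}(|\phi|)(z)
\]
pointwise. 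Taking $L^{p_0}$ norms and applying the assumed boundedness of $P^+_{\D}$ on $L^{p_0}$ gives $\|B_{\D}\phi\|_{L^{p_0}}\leq M\,\|P^+_{\D}\|_{L^{p_0}}\,\|\phi\|_{L^{p_0}}$, which is precisely the stated norm bound and establishes $L^{p_0}$ regularity.

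To reach the whole interval $p_0\leq p\leq\infty$, I would note that $B_{\D}$ is bounded on $L^{\infty}$ with $\|B_{\D}\|_{L^{\infty}}\leq 1$, since $\int_{\D}|k_z^{\D}(w)|^2\,dV(w)=\|k_z^{\D}\|_{L^2}^2=1$ forces $|B_{\D}\phi(z)|\leq\|\phi\|_{L^{\infty}}$. With $B_{\D}$ bounded on both $L^{p_0}$ and $L^{\infty}$, the Riesz--Thorin interpolation theorem applies to the fixed integral operator $B_{\D}$, which acts unambiguously on the dense subspace $L^{p_0}(\D)\cap L^{\infty}(\D)$, and yields boundedness on $L^p(\D)$ for every $p_0\leq p\leq\infty$.

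The only genuinely delicate point is the kernel domination: everything hinges on peeling off one factor of $|K_{\D}(w,z)|$ from the Berezin kernel, rewriting the remaining factor as $K_{\D}(z,z)$ times the BR-bounded ratio, so that property BR converts the Berezin kernel into a constant multiple of the absolute Bergman kernel. The $L^{\infty}$ estimate and the interpolation step are then routine.
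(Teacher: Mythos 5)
Your proof is correct and follows essentially the same route as the paper: dominate the Berezin kernel $|K_{\D}(w,z)|^2/K_{\D}(z,z)$ by $M\,|K_{\D}(z,w)|$ using property BR and conjugate symmetry, deduce the $L^{p_0}$ bound from the assumed boundedness of $P^+_{\D}$, and interpolate with the trivial $L^{\infty}$ bound via Riesz--Thorin. The only cosmetic difference is that the paper runs the estimate for $\phi\in L^{\infty}(\D)$ and then invokes density of bounded compactly supported functions, whereas you handle the extension by treating $B_{\D}$ directly as an integral operator; both resolutions of this minor point are fine.
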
 

The absolute Bergman projections are $L^p$ regular on domains whose Bergman 
kernels are of sharp $\mathcal{B}$-type (see Lemma \ref{LemP+}, below). 
On products of such domains we have the following result.  

\begin{theorem}\label{Thm1}
Let $\D=\D_1\times \cdots \times \D_m$ be a finite product of bounded  
 domains whose Bergman kernels are of sharp $\mathcal{B}$-type. Then the 
Berezin transform $B_{\D}:L^p(\D)\to L^p(\D)$ is bounded for all 
$1< p\leq \infty$. Furthermore, 
\[\|B_{\D}\|_{L^p}\leq 
\sup\left\{\frac{|K_{\D}(w,z)|}{K_{\D}(z,z)}:z,w\in \D \right\} 
\|P^+_{\D_1}\|_{L^p}\cdots \|P^+_{\D_m}\|_{L^p} \]
for  $1<p<\infty$ and where $P^+_{\D_j}$ is the absolute Bergman projection 
of $\D_j$ for $1\leq j\leq m$. 
\end{theorem}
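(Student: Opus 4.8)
The plan is to reduce Theorem \ref{Thm1} to Proposition \ref{Prop1} by verifying its two hypotheses for the product domain $\D=\D_1\times\cdots\times\D_m$: that $\D$ satisfies property BR, and that the absolute Bergman projection $P^+_{\D}$ is $L^p$ bounded for $1<p<\infty$. The first hypothesis is immediate from the cited results: since each $\D_j$ has a sharp $\mathcal{B}$-type Bergman kernel, Corollary \ref{CorKernelEst} shows that their product $\D$ satisfies property BR. So the real work lies in establishing $L^p$ boundedness of $P^+_{\D}$ together with the factorized norm estimate $\|P^+_{\D}\|_{L^p}\leq\prod_{j=1}^m\|P^+_{\D_j}\|_{L^p}$.

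The main tool is the product formula for the Bergman kernel, $K_{\D}(z,w)=\prod_{j=1}^m K_{\D_j}(z_j,w_j)$, where $z=(z_1,\ldots,z_m)$ and $w=(w_1,\ldots,w_m)$ with $z_j,w_j\in\D_j$. Taking absolute values and inserting into the definition of $P^+_{\D}$ shows that $P^+_{\D}$ factors as a composition of the operators $P^+_{\D_j}$ acting one variable at a time; that is, applying $P^+_{\D_m}$ in the last slot, then $P^+_{\D_{m-1}}$ in the previous slot, and so on. Each $P^+_{\D_j}$ is bounded on $L^p(\D_j)$ for $1<p<\infty$ by Lemma \ref{LemP+}, since $\D_j$ has sharp $\mathcal{B}$-type Bergman kernel.

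To obtain the norm bound I would argue by induction on $m$ using an iterated integral estimate. For the model case $m=2$, I first fix the second variable and apply the $L^p(\D_1)$ bound for $P^+_{\D_1}$ in the first variable, then integrate over the second variable and apply the $L^p(\D_2)$ bound for $P^+_{\D_2}$ in the second variable, invoking Tonelli's theorem to interchange the order of integration (legitimate because all the kernels are nonnegative). Raising to the $p$-th power and chaining the two estimates yields $\|P^+_{\D}f\|_{L^p}\leq\|P^+_{\D_1}\|_{L^p}\|P^+_{\D_2}\|_{L^p}\|f\|_{L^p}$, and the general $m$ follows by the same reasoning applied slot by slot.

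With both hypotheses of Proposition \ref{Prop1} verified, I fix $1<p<\infty$, apply Proposition \ref{Prop1} with $p_0=p$, and combine its norm estimate with the factorization above to obtain exactly the claimed bound on $\|B_{\D}\|_{L^p}$. Letting $p$ range over $(1,\infty)$ gives boundedness there, while the endpoint $p=\infty$ is covered because $\D$ is bounded, so $\|B_{\D}\|_{L^\infty}\leq 1$ (equivalently, any choice of $p_0\in(1,\infty)$ in Proposition \ref{Prop1} already yields $L^\infty$ boundedness). There is no genuinely hard obstacle here: the theorem is essentially an assembly of the preceding lemmas, and the only step requiring care is the Tonelli interchange in the tensor-product norm estimate, which the nonnegativity of the kernels renders routine.
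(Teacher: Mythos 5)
Your proposal is correct and follows essentially the same route as the paper: the paper's own proof of Theorem \ref{Thm1} simply reruns the argument of Proposition \ref{Prop1} using Corollary \ref{CorKernelEst} (property BR for the product plus the factorization \eqref{Eqn3} of $\|P^+_{\D}\|_{L^p}$, which rests on Lemmas \ref{LemProdP+} and \ref{LemP+}). The only difference is cosmetic: you re-derive the Fubini/Tonelli tensor-product estimate for $P^+_{\D}$ rather than citing Lemma \ref{LemProdP+}, and you invoke Proposition \ref{Prop1} as a black box with $p_0=p$, which yields the identical bound.
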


Our second result is about the $L^2$ irregularity of the Berezin transform 
on the Hartogs triangle. 
\begin{theorem}\label{ThmHartogs} 
Let $\Ha=\{(z_1,z_2)\in \C^2:|z_2|<|z_1|<1\}$ be the Hartogs triangle. Then the Berezin 
transform $B_{\mathbb{H}}$ is not a bounded mapping on $L^2(\mathbb{H})$. 
\end{theorem}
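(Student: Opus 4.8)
The plan is to exhibit an explicit family of nonnegative test functions on $\Ha$ whose Berezin transforms have $L^2$ norms that blow up strictly faster than the functions themselves. Writing the Berezin transform as an integral operator, $B_{\Ha}\phi(z)=\int_{\Ha}\phi(w)\mathcal{B}(z,w)\,dV(w)$, the explicit formula for $K_{\Ha}$ gives the positive kernel
\[\mathcal{B}(z,w)=\frac{|K_{\Ha}(w,z)|^2}{K_{\Ha}(z,z)}=\frac{|w_1|^2(|z_1|^2-|z_2|^2)^2(1-|z_1|^2)^2}{\pi^2|w_1\zb_1-w_2\zb_2|^4|1-w_1\zb_1|^4}.\]
The source of the irregularity is the pinching of $\Ha$ at the origin, which is exactly where property BR was seen to fail. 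First I would take the radial test functions $\phi_{\ep}(w)=|w_1|^{-2+\ep}$ for small $\ep>0$ and record, by a direct computation in polar coordinates, that $\|\phi_{\ep}\|_{L^2(\Ha)}^2=\pi^2/\ep$; note that $\phi_{\ep}\in L^2(\Ha)$ precisely because $\ep>0$, and that $\phi_{\ep}$ concentrates at the singular boundary point $0$ as $\ep\to 0^+$.

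The heart of the argument is a pointwise lower bound for $B_{\Ha}\phi_{\ep}(z)$, valid for $z$ in a fixed region $E\subset\Ha$ bounded away from $b\Ha$ and from $\{z_1=0\}$. I would restrict the defining integral to the region $R=\{w\in\Ha:|w_2|<\tfrac12|w_1|,\ |w_1|<\tfrac12\}$ near the pinch, which is legitimate since $\phi_\ep\ge 0$ and $\mathcal{B}\ge 0$. On $R$, using $|z_2|\le|z_1|$, the triangle inequality gives $\tfrac12|w_1||z_1|\le|w_1\zb_1-w_2\zb_2|\le\tfrac32|w_1||z_1|$ and $|1-w_1\zb_1|\asymp 1$, so both denominators of $\mathcal{B}(z,w)$ are comparable to $|w_1|^4|z_1|^4$. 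Discarding these harmless factors reduces the estimate to the scalar integral $\int_R|w_1|^{-4+\ep}\,dV(w)$, which in polar coordinates equals a constant times $\int_0^{1/2}r_1^{-1+\ep}\,dr_1\sim 1/\ep$. This yields $B_{\Ha}\phi_{\ep}(z)\ge c\,D(z)/\ep$ for $z\in E$, where $D(z)=(|z_1|^2-|z_2|^2)^2(1-|z_1|^2)^2|z_1|^{-4}$ is a fixed function bounded below on $E$.

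Finally I would integrate this lower bound over $E$ to obtain $\|B_{\Ha}\phi_{\ep}\|_{L^2(\Ha)}^2\ge\int_E|B_{\Ha}\phi_{\ep}|^2\,dV\ge c'/\ep^2$ with $c'>0$, so that
\[\frac{\|B_{\Ha}\phi_{\ep}\|_{L^2(\Ha)}}{\|\phi_{\ep}\|_{L^2(\Ha)}}\ge\frac{\sqrt{c'}/\ep}{\pi/\sqrt{\ep}}=\frac{\sqrt{c'}}{\pi}\cdot\frac{1}{\sqrt{\ep}}\longrightarrow\infty\qquad(\ep\to 0^+),\]
which shows that $B_{\Ha}$ is unbounded on $L^2(\Ha)$. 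The step I expect to be the main obstacle is the pointwise lower bound: one must choose the truncation region $R$ and the target region $E$ so that the cross term $w_2\zb_2$ cannot cancel $w_1\zb_1$ in the denominator, keeping $|w_1\zb_1-w_2\zb_2|$ comparable to $|w_1||z_1|$. The constraints $|w_2|<|w_1|$ and $|z_2|<|z_1|$ built into $\Ha$ are exactly what make this possible, so the geometry of the triangle is doing the essential work.

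A cleaner but less self-contained alternative would be to transfer the problem through the biholomorphism $(\zeta_1,\zeta_2)\mapsto(\zeta_1,\zeta_1\zeta_2)$ from $(\mathbb{D}\setminus\{0\})\times\mathbb{D}$: since $|\det J_{\C}F(\zeta)|=|\zeta_1|$, the change of variables and the invariance $B_{\Ha}\phi\circ F=B_{(\mathbb{D}\setminus\{0\})\times\mathbb{D}}(\phi\circ F)$ identify $B_{\Ha}$ on $L^2(\Ha)$ with $B_{\mathbb{D}}\otimes B_{\mathbb{D}}$ acting on the weighted space $L^2(\mathbb{D}^2,|\zeta_1|^2\,dV)$, which (as $B_{\mathbb{D}}$ is bounded on $L^2(\mathbb{D})$) reduces the whole matter to the unboundedness of $B_{\mathbb{D}}$ on $L^2(\mathbb{D},|\zeta|^2\,dA)$ at the critical power weight, detected by the same radial functions $|\zeta|^{-2+\ep}$. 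I would, however, present the direct computation above as the primary proof.
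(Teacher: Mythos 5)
Your proof is correct, and it follows the same overall strategy as the paper --- the same family of radial test symbols $|w_1|^{-2+\ep}$ concentrating at the pinch (the paper uses $|w_1|^{-2+2\ep}$, a harmless reparametrization), the same $1/\ep$ blow-up of the Berezin transform against the $1/\sqrt{\ep}$ growth of the symbol's norm --- but the key estimate is carried out quite differently. The paper computes $B_{\Ha}f_{\ep}$ \emph{exactly}: it expands $K_{\Ha}$ in the orthogonal monomial basis $\{z_1^nz_2^m: m\geq 0,\ n+m\geq -1\}$, exploits the radial symmetry of the symbol so that all cross terms vanish, reindexes with $k=n+m+1$, and sums the series in closed form to get $B_{\Ha}f_{\ep}(z)=(1-|z_1|^2)^2\sum_{k\geq 0}\frac{(k+1)^2}{k+\ep}|z_1|^{2k}$, from which the $k=0$ term alone gives the lower bound $(1-|z_1|^2)^2/\ep$ valid on all of $\Ha$. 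You instead work with the closed-form kernel $|K_{\Ha}(w,z)|^2/K_{\Ha}(z,z)$ and get a pointwise lower bound by truncating the integral to the cone region $R=\{|w_2|<\tfrac12|w_1|,\ |w_1|<\tfrac12\}$, where the denominator $|w_1\zb_1-w_2\zb_2|$ is comparable to $|w_1||z_1|$ and $|1-w_1\zb_1|\asymp 1$; your comparability constants and the computation $\int_R|w_1|^{-4+\ep}\,dV\sim 1/\ep$ check out, and restricting the final integration to a compact sub-region $E$ where $D(z)$ is bounded below costs nothing. Your route is more elementary and more robust --- it needs only the kernel formula and positivity, not the basis expansion or the exact summation --- while the paper's route buys an exact formula for $B_{\Ha}f_{\ep}$ (hence a global, explicit lower bound) at the price of a longer computation. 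Your closing alternative via the biholomorphism $(\zeta_1,\zeta_2)\mapsto(\zeta_1,\zeta_1\zeta_2)$ is also sound in outline (the Berezin transform is biholomorphically invariant and the Jacobian weight $|\zeta_1|^2$ is exactly what makes the critical power $|\zeta_1|^{-2+\ep}$ square-integrable), and it isolates nicely that the failure is a weighted one-variable phenomenon; but as you present it only as a sketch, the direct argument is rightly your primary proof.
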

\section*{Proofs of Proposition \ref{Prop1} and Theorem \ref{Thm1}}
We start this section with the proof of Proposition \ref{Prop1}. 

\begin{proof}[Proof of Proposition \ref{Prop1}]
First, as mentioned in the introduction, $B_{\D}:L^{\infty}(\D)\to L^{\infty}(\D)$ is 
bounded and $\|B_{\D}\|_{L^{\infty}}\leq 1$.
 
Let  $C_{\D}= \sup\left\{\frac{|K_{\D}(w,z)|}{K_{\D}(z,z)}:z,w\in \D \right\}<\infty$ 
and  $\phi\in L^{\infty}(\D)$. Then
\begin{align*}
 |B_{\D}\phi(z)| \leq &  \int \frac{|K_{\D}(w,z)|^2}{K_{\D}(z,z)}|\phi(w)| dV(w)\\
\leq & C_{\D} \int |K_{\D}(w,z)||\phi(w)| dV(w)\\
=& C_{\D}P^+_{\D}|\phi|(z).
\end{align*}
Then
\[\|B_{\D}\phi\|_{L^{p_0}}\leq C_{\D}\|P^+_{\D}|\phi|\|_{L^{p_0}} 
\leq C_{\D}\|P^+_{\D}\|_{L^{p_0}} \|\phi\|_{L^{p_0}}.\]
Therefore, using the fact that measurable bounded compactly supported functions 
are dense in $L^{p_0}(\D)$, we conclude that  
\[\|B_{\D}\|_{L^{p_0}}\leq C_{\D}\|P^+_{\D}\|_{L^{p_0}}.\]
Then Riesz-Thorin Interpolation Theorem (see, for instance, \cite[Section 6.5]{FollandBook}) 
implies that $B_{\D}:L^p(\D)\to L^p(\D)$ is bounded for all $p_0\leq p\leq \infty$.  
\end{proof}

\begin{remark} \label{Rmk1} 
We note that the Berezin transform is not a self-adjoint operator, in general. 
Indeed one can compute that 
\begin{align*}
\langle B_{\D}\phi,\psi \rangle 
=& \int \int \phi(w)\frac{|K_{\D}(w,z)|^2}{K_{\D}(z,z)}\overline{\psi(z)}dV(w)dV(z)\\
=& \int \phi(w)\overline{K_{\D}(w,w) 
	\int \frac{|K_{\D}(z,w)|^2}{K_{\D}(w,w)}\frac{\psi(z)}{K_{\D}(z,z)} dV(z)}dV(w)\\ 
=&\langle \phi,M_{K_{\D}}B_{\D}M_{\frac{1}{K_{\D}}}\psi \rangle 
\end{align*} 
for $\phi,\psi\in C^{\infty}_0(\D)$ where $M$ denotes the multiplication operator. 
That is $B_{\D}^*= M_{K_{\D}}B_{\D}M_{\frac{1}{K_{\D}}}$. Then, on the unit disc we have 
$B_{\mathbb{D}} 1=1$ while 
\[B_{\mathbb{D}}^*1(0) = \int_{\mathbb{D}}\frac{|K_{\mathbb{D}}(0,w)|^2dV(w)}{K_{\mathbb{D}}(w,w)} 
= \frac{1}{\pi}\int_{\mathbb{D}}(1-|w|^2)^2dV(w)=2\int_0^1(1-r^2)^2rdr= \frac{1}{3}.\] 
Hence $B_{\mathbb{D}}^* \neq B_{\mathbb{D}}$. 

We note that, since $B_{\D}$ is not necessarily self-adjoint, we cannot use $L^p$ regularity 
of $B_{\D}$ for $p_0\leq p\leq \infty$ in the proof of Proposition \ref{Prop1} to conclude 
$L^p$ regularity for $1<p<p_0/(p_0-1)$. 
\end{remark}

We will skip the proof of the following lemma as it is simply a consequence 
of the fact that the Bergman kernel of the product of domains is the product of the 
Bergman kernel of each factor. 
\begin{lemma}\label{LemProd}
Let $\D=\D_1\times\cdots\times \D_m$ be a product domain such that  each $\D_j$ is a 
domain satisfying  property BR. Then $\D$ satisfies property BR. 
\end{lemma}
\begin{lemma}\label{LemProdP+}
Let $\D=\D_1\times\cdots\times \D_m$ be a product domain such that  each 
$P^+_{\D_j}:L^p(\D_j)\to L^p(\D_j)$ is bounded for some $1\leq p<\infty$. Then 
$P^+_{\D}:L^p(\D)\to L^p(\D)$ is bounded and 
\[\|P^+_{\D}\|_{L^p} = \|P^+_{\D_1}\|_{L^p(\D_1)} \cdots \|P^+_{\D_m}\|_{L^p(\D_m)}.\] 
\end{lemma}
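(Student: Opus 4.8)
The plan is to exploit the fact that on a product domain the Bergman kernel factors. As recalled before Lemma \ref{LemProd}, $K_{\D}(z,w)=\prod_{j=1}^m K_{\D_j}(z_j,w_j)$, and hence $|K_{\D}(z,w)|=\prod_{j=1}^m |K_{\D_j}(z_j,w_j)|$, so that $P^+_{\D}$ is exactly the tensor product of the positive integral operators $P^+_{\D_1},\dots,P^+_{\D_m}$. First I would reduce to the case $m=2$ by induction on the number of factors, writing $\D=\D_1\times \D'$ with $\D'=\D_2\times\cdots\times \D_m$ and applying the inductive hypothesis on $\D'$. Because every kernel is nonnegative, it also suffices to test the operator norm on nonnegative $f\in L^p(\D)$, which lets me apply Tonelli's theorem freely and sidestep any convergence issues.

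For the upper bound, write $A=P^+_{\D_1}$ acting in the first variable and $B=P^+_{\D_2}$ acting in the second, so that $P^+_{\D}=(A\otimes I)(I\otimes B)$. I would first estimate $I\otimes B$ fiberwise: for nonnegative $f$ and almost every $z_1\in \D_1$, the slice $z_2\mapsto (I\otimes B)f(z_1,z_2)$ is simply $B$ applied to $f(z_1,\cdot)$, so that
\[\|(I\otimes B)f(z_1,\cdot)\|_{L^p(\D_2)}\leq \|B\|_{L^p(\D_2)}\,\|f(z_1,\cdot)\|_{L^p(\D_2)}.\]
Raising to the $p$-th power and integrating in $z_1$ via Fubini gives $\|I\otimes B\|_{L^p(\D)}\leq \|B\|_{L^p(\D_2)}$, and the symmetric argument gives $\|A\otimes I\|_{L^p(\D)}\leq \|A\|_{L^p(\D_1)}$. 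Composing the two factors then yields
\[\|P^+_{\D}\|_{L^p}\leq \|A\otimes I\|_{L^p}\,\|I\otimes B\|_{L^p}\leq \|P^+_{\D_1}\|_{L^p(\D_1)}\,\|P^+_{\D_2}\|_{L^p(\D_2)}.\]

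For the reverse inequality I would use product test functions. Given $\ep>0$, choose nonnegative $u\in L^p(\D_1)$ and $v\in L^p(\D_2)$ with $\|Au\|_{L^p}\geq(\|A\|_{L^p}-\ep)\|u\|_{L^p}$ and $\|Bv\|_{L^p}\geq(\|B\|_{L^p}-\ep)\|v\|_{L^p}$. Setting $f(z_1,z_2)=u(z_1)v(z_2)$, the factorization of the kernel gives $P^+_{\D}f=(Au)\otimes(Bv)$, while Fubini gives $\|f\|_{L^p(\D)}=\|u\|_{L^p}\|v\|_{L^p}$ and $\|P^+_{\D}f\|_{L^p(\D)}=\|Au\|_{L^p}\|Bv\|_{L^p}$; letting $\ep\to 0$ yields $\|P^+_{\D}\|_{L^p}\geq \|A\|_{L^p}\|B\|_{L^p}$. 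Combining the two bounds gives the claimed equality for $m=2$, and the induction finishes the general case. The only point requiring genuine care — the main obstacle — is the bookkeeping around Fubini/Tonelli and the measurability of the fibered norms $z_1\mapsto \|f(z_1,\cdot)\|_{L^p(\D_2)}$; nonnegativity of the kernels is precisely what renders these manipulations routine.
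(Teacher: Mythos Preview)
Your proof is correct and follows essentially the same approach as the paper: the lower bound via product test functions is identical to the paper's argument, and your tensor-product factorization $P^+_{\D}=(A\otimes I)(I\otimes B)$ together with the fiberwise estimate is precisely what the paper carries out via explicit iterated integrals and Fubini's theorem. The only difference is notational framing.
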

\begin{proof}
For $1>\ep>0$ we choose $\phi_{j,\ep}\in L^p(\D_j)$ 
such that $\|\phi_j\|_{L^p(\D_j)}=1$ and 
\[\|P^+_{\D_j}\phi_{j,\ep}\|_{L^p(\D_j)}\geq \|P^+_{\D_j}\|_{L^p}-\ep\] 
for $j=1,\ldots, m$. Then for $\phi_{\ep}=\phi_{1,\ep}\cdots \phi_{m,\ep}$ we have 
$\|\phi_{\ep}\|_{L^p(\D)}=1$ and 
\begin{align*}
(\|P^+_{\D_1}\|_{L^p(\D_1)} -\ep)\cdots (\|P^+_{\D_m}\|_{L^p(\D_m)}-\ep)  
\leq& \|P^+_{\D_1}\phi_{1,\ep}\|_{L^p(\D_1)} \cdots \|P^+_{\D_m}\phi_{m,\ep}\|_{L^p(\D_m)} \\
=&\|P^+_{\D}\phi_{\ep}\|_{L^p(\D)}\\
\leq& \|P^+_{\D}\|_{L^p(\D)}.
\end{align*} 
Then by letting $\ep\to 0$ we get 
\[\|P^+_{\D_1}\|_{L^p(\D_1)} \cdots \|P^+_{\D_m}\|_{L^p(\D_m)}\leq \|P^+_{\D}\|_{L^p(\D)}.\]

To prove the converse, we first assume that $\D=\D_1\times \D_2$. 
Let $\phi\in L^p(\D)$. Then using Fubini's Theorem below we have 
\begin{align*}
&\|P^+_{\D}\phi\|^p_{L^p}\\
&\leq \int_{\D_2} \int_{\D_1}\left(\int_{\D_1} |K_{\D_1}(z_1,w_1)|\int_{\D_2} 
|K_{\D_2}(z_2,w_2)| |\phi(w_1,w_2)|dV(w_2) dV(w_1)\right)^pdV(z_1) dV(z_2)\\
&\leq \|P^+_{\D_1}\|^p_{L^p(\D_1)}\int_{\D_2} \int_{\D_1}\left( \int_{\D_2} 
 |K_{\D_2}(z_2,w_2)||\phi(z_1,w_2)|dV(w_2)\right)^pdV(z_1) dV(z_2)\\
&= \|P^+_{\D_1}\|^p_{L^p(\D_1)}\int_{\D_1} \int_{\D_2}\left( \int_{\D_2} 
 |K_{\D_2}(z_2,w_2)||\phi(z_1,w_2)|dV(w_2)\right)^pdV(z_2) dV(z_1)\\
&\leq\|P^+_{\D_1}\|^p_{L^p(\D_1)}\|P^+_{\D_2}\|^p_{L^p(\D_2)}\int_{\D}|\phi(z)|^pdV(z).
\end{align*} 
Hence $\|P^+_{\D}\|_{L^p} \leq \|P^+_{\D_1}\|_{L^p(\D_1)}\|P^+_{\D_2}\|_{L^p(\D_2)}$. 
Then we use an induction argument to conclude that 
$\|P^+_{\D}\|_{L^p} \leq \|P^+_{\D_1}\|_{L^p(\D_1)}\cdots \|P^+_{\D_m}\|_{L^p(\D_m)}$. 
\end{proof}

The following proposition is an easy consequence of \cite[Proposition 2.4]{KhanhLiuThuc19}. 
We sketch the proof here for the convenience of the reader. 

\begin{lemma}\label{LemP+} 
Let $\D$ be a bounded domain in $\C^n$ whose Bergman kernel is of $\mathcal{B}$-type. 
Then the absolute Bergman projection $P^+_{\D}:L^p(\D)\to L^p(\D)$ is bounded for  
all $1<p<\infty$.
\end{lemma}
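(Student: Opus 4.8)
The plan is to reduce the $L^p$ boundedness of the absolute Bergman projection $P^+_{\D}$ to a Schur-test argument driven by pointwise size estimates on the Bergman kernel. The starting point is the defining property of a $\mathcal{B}$-type kernel: on such a domain one has a comparison $|K_{\D}(z,w)| \lesssim 1/V(B(z,w))$, where $V(B(z,w))$ is the volume of a suitable nonisotropic ball (a $\mathcal{B}$-ball) adapted to the geometry of $b\D$, and more importantly one has the integrability estimates
\[\int_{\D}\frac{dV(w)}{V(B(z,w))} \lesssim 1 \quad\text{and}\quad \int_{\D}\frac{dV(z)}{V(B(z,w))} \lesssim 1,\]
uniformly in $z$ and $w$ respectively. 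These two uniform integral bounds are exactly what \cite[Proposition 2.4]{KhanhLiuThuc19} supplies, so the real content of the lemma is packaging them correctly.

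First I would record the kernel size estimate and the two uniform integrals above as the consequence of the $\mathcal{B}$-type hypothesis, citing \cite[Proposition 2.4]{KhanhLiuThuc19}. Next I would invoke the Schur test: to show $P^+_{\D}$ is bounded on $L^p(\D)$ for a fixed $1<p<\infty$, it suffices to produce a strictly positive auxiliary function $h$ on $\D$ and a constant $C$ with
\begin{align*}
\int_{\D}|K_{\D}(z,w)|\,h(w)^{p'}\,dV(w) &\leq C\,h(z)^{p'},\\
\int_{\D}|K_{\D}(z,w)|\,h(z)^{p}\,dV(z) &\leq C\,h(w)^{p},
\end{align*}
where $p'=p/(p-1)$ is the conjugate exponent. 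The natural choice is a fractional power of the on-diagonal kernel, $h(z)=K_{\D}(z,z)^{-\gamma}$ for a suitably small exponent $\gamma>0$, since $K_{\D}(z,z)^{-1}\approx V(B(z,z))$ measures the size of the ball at $z$. With this choice the two Schur integrals become estimates of the form $\int_{\D} V(B(z,w))^{-1}\,\bigl(\text{ratio of ball volumes}\bigr)\,dV$, which one controls by combining the doubling property of the $\mathcal{B}$-metric with the two uniform integral bounds quoted above.

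The main obstacle will be verifying the two Schur inequalities with the fractional weight: this requires comparing $V(B(w,w))$, $V(B(z,z))$, and $V(B(z,w))$ against one another. The key geometric input is a doubling/engulfing property of the $\mathcal{B}$-balls, which yields a comparison $V(B(z,w)) \approx V(B(w,w))$ when $z$ lies in $B(w,w)$, together with a controlled polynomial growth of the volume ratio as one moves away. One then splits each Schur integral into the region near the diagonal, where the weight factors are comparable and the bound reduces directly to $\int_{\D} V(B(z,w))^{-1}\,dV \lesssim 1$, and the far region, where the extra decay from the weight absorbs the volume growth. Choosing $\gamma$ small enough (depending on $p$ and the doubling exponent) guarantees convergence in the far region. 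I would remark that since this is essentially the McNeal–Stein scheme \cite{McNealStein94} carried out in the abstract $\mathcal{B}$-type framework, I will only sketch the weight choice and the two resulting estimates, deferring the detailed ball-geometry bookkeeping to \cite{KhanhLiuThuc19}.
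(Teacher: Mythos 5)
Your overall strategy (a Schur test driven by \cite[Proposition 2.4]{KhanhLiuThuc19}, in the spirit of McNeal--Stein) is the same as the paper's, but two concrete steps in your reduction are wrong, and the first one is fatal as stated.

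The two ``uniform integral bounds'' that you take as the content of \cite[Proposition 2.4]{KhanhLiuThuc19} are false, and they are not what that proposition provides. Already on the unit disc (strongly pseudoconvex, hence of sharp $\mathcal{B}$-type) one has
\[\int_{\mathbb{D}}|K_{\mathbb{D}}(z,w)|\,dV(w)=\frac{1}{\pi}\int_{\mathbb{D}}\frac{dV(w)}{|1-z\wb|^{2}}\approx \log\frac{1}{1-|z|^{2}}\longrightarrow\infty \quad\text{as } |z|\to 1,\]
so no bound of the form $\int_{\D}V(B(z,w))^{-1}dV(w)\leq C$ can hold once $|K_{\D}(z,w)|$ is comparable to $V(B(z,w))^{-1}$. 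Worse, if your two bounds were true, then Fubini and the trivial sup-estimate would make $P^{+}_{\D}$ bounded on $L^{1}$ and on $L^{\infty}$, hence (by interpolation) on every $L^{p}$ with $1\leq p\leq\infty$; that conclusion fails on the disc, which is exactly why the lemma is restricted to $1<p<\infty$. What \cite[Proposition 2.4]{KhanhLiuThuc19} actually supplies --- and what the paper's proof uses --- is a \emph{weighted}, almost-eigenfunction estimate: for $0<\ep<1$ there exists $c_{\ep}$ such that $P^{+}_{\D}\rho^{-\ep}\leq c_{\ep}\rho^{-\ep}$ on $\D$, where $\rho$ is the distance to $b\D$.

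Your Schur weight also points in the wrong direction. Since $K_{\D}(z,z)\to\infty$ at the boundary, $h(z)=K_{\D}(z,z)^{-\gamma}$ with $\gamma>0$ \emph{vanishes} at $b\D$, and the Schur inequalities with such a weight cannot hold: on the disc, for $s>0$,
\[\int_{\mathbb{D}}|K_{\mathbb{D}}(z,w)|\,(1-|w|^{2})^{s}\,dV(w)\geq \int_{|w|<1/2}|K_{\mathbb{D}}(z,w)|\,(1-|w|^{2})^{s}\,dV(w)\geq c>0\]
uniformly in $z$, whereas the required right-hand side, a constant times $(1-|z|^{2})^{s}$, tends to $0$ as $|z|\to 1$. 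The admissible test functions must \emph{blow up} at the boundary: take $h=\rho^{-\ep}$ with $\ep>0$ so small that $\ep p<1$ and $\ep p'<1$. Then the two Schur inequalities for $P^{+}_{\D}$ on $L^{p}$ are precisely two instances of the weighted estimate quoted above (applied with exponents $\ep p'$ and $\ep p$, using $|K_{\D}(z,w)|=|K_{\D}(w,z)|$), and Schur's test yields boundedness for all $1<p<\infty$. This is the paper's argument, following \cite[pg 184]{McNealStein94}; to repair your outline, replace the unweighted volume bounds by this weighted estimate and invert your weight.
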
 
\begin{proof} 
Let $\rho(z)$ denote the distance from $z$ to the boundary of $\D$. Then 
\cite[Proposition 2.4]{KhanhLiuThuc19} implies that  for $0<\ep<1$ there exists $c_{\ep}$ 
such that $P^+_{\D}\rho^{-\ep}\leq c_{\ep}\rho^{-\ep}$ on $\D$. Then a standard argument 
(see, for instance, \cite[pg 184]{McNealStein94}) using the Schur's test 
implies that $P^+_{\D}:L^p(\D)\to L^p(\D)$ is bounded for  all $1<p<\infty$.
\end{proof}

Next we prove the fact that domains whose Bergman kernels are of 
sharp $\mathcal{B}$-type satisfy property BR. 
 
\begin{lemma}\label{LemKernelEst}
Let $\D$ be a bounded domain in $\C^n$ whose Bergman kernel is of 
sharp $\mathcal{B}$-type. Then $\D$ satisfies property BR. 
\end{lemma}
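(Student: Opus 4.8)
The plan is to extract from the definition of a sharp $\mathcal{B}$-type kernel a \emph{two-sided} pointwise estimate and then reduce property BR to a monotonicity statement about the associated volume function. Recall that the sharp $\mathcal{B}$-type hypothesis of \cite{KhanhLiuThuc19} equips $\D$ with a quasi-distance and, for each $z\in\D$ and scale $\epsilon>0$, an adapted anisotropic polydisc whose Euclidean volume I will write as $V(z,\epsilon)$; the map $\epsilon\mapsto V(z,\epsilon)$ is nondecreasing (indeed doubling) in the scale variable, and there is a scale function $\epsilon(z,w)$, comparable to $\delta(z)+\delta(w)+(\text{anisotropic distance between }z,w)$ with $\delta(\cdot)$ the distance to $b\D$, for which
\[
\frac{c_1}{V(z,\epsilon(z,w))}\le |K_{\D}(w,z)|\le \frac{c_2}{V(z,\epsilon(z,w))}
\]
holds with constants $c_1,c_2>0$ independent of $z,w\in\D$. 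It is precisely the word \emph{sharp} that supplies the lower bound here; an upper bound on $|K_{\D}(w,z)|$ alone would not control the quotient defining property BR.

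First I would dispose of the nonvanishing requirement. Since $\D$ is bounded it has finite volume, so the constant function $1$ lies in $A^2(\D)$, and the reproducing property gives $K_{\D}(z,z)=\|K_{\D}(\cdot,z)\|_{L^2}^2\ge |1|^2/V(\D)>0$ for every $z\in\D$; this is the first half of property BR. Next I would specialize the displayed estimate to the diagonal: taking $w=z$ gives $\epsilon(z,z)\approx\delta(z)$ and hence $K_{\D}(z,z)\ge c_1/V(z,\epsilon(z,z))$. The main step is then the ratio bound: for arbitrary $z,w\in\D$,
\[
\frac{|K_{\D}(w,z)|}{K_{\D}(z,z)}\le \frac{c_2/V(z,\epsilon(z,w))}{c_1/V(z,\epsilon(z,z))}
=\frac{c_2}{c_1}\cdot\frac{V(z,\epsilon(z,z))}{V(z,\epsilon(z,w))}.
\]
Because the scale function is bounded below by the boundary distance of $z$, one has $\epsilon(z,w)\ge c\,\delta(z)\gtrsim \epsilon(z,z)$, so the monotonicity and doubling of $V(z,\cdot)$ force the final quotient below an absolute constant. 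Taking the supremum over $z,w\in\D$ yields $\sup\{|K_{\D}(w,z)|/K_{\D}(z,z):z,w\in\D\}<\infty$, which completes property BR.

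The hard part is essentially bookkeeping with the technical definition rather than analysis: one must confirm that the scale function attached to the sharp $\mathcal{B}$-type structure genuinely satisfies $\epsilon(z,w)\gtrsim\delta(z)$ uniformly, so that no choice of $w$ lets $\epsilon(z,w)$ fall below the diagonal scale $\epsilon(z,z)$, and that $V(z,\cdot)$ is monotone and doubling in the scale. These properties are built into the McNeal--Stein polydisc construction underlying \cite{KhanhLiuThuc19}, so once the two-sided estimate is written in the normalized form above the remaining inequalities are immediate.
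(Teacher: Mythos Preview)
Your proposal is correct and follows essentially the same route as the paper: both use the two-sided kernel estimate built into the sharp $\mathcal{B}$-type hypothesis to bound $|K_{\D}(w,z)|/K_{\D}(z,z)$ by a ratio of volumes (or boundary-distance powers) that is trivially $\leq C$. The paper simply quotes the resulting inequality $\frac{|K_{\D}(w,z)|}{K_{\D}(z,z)}\leq \frac{C|r(z)|^2}{(|r(z)|+|r(w)|)^2}$ from \cite[(2.5)]{KhanhLiuThuc19}, whereas you unpack the mechanism (upper bound off-diagonal, lower bound on-diagonal, monotonicity of $V(z,\cdot)$ in the scale) that produces it.
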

\begin{proof}
We use an argument similar to \cite[(2.5)]{KhanhLiuThuc19} to conclude 
that there exists  $0<C<\infty$ such that 
\[\frac{|K_{\D}(w,z)|}{K_{\D}(z,z)}
	\leq \frac{C|r(z)|^2}{(|r(z)|+|r(w)|)^2}\leq C\]
for all $z,w\in \D$. Therefore, 
$\sup\left\{\frac{|K_{\D}(w,z)|}{K_{\D}(z,z)}:z,w\in \D \right\}<\infty$.
\end{proof}

\begin{corollary}\label{CorKernelEst}
Let $\D=\D_1\times\cdots\times \D_m$ be a product domain such that each $\D_j$ is a 
bounded domain whose Bergman kernel is of sharp $\mathcal{B}$-type. Then 
\begin{itemize}
\item[i.]  $\D$ satisfies property BR,
\item[ii.] the absolute Bergman projection $P^+_{\D}:L^p(\D)\to L^p(\D)$ is bounded for  
all $1<p<\infty$ and 
\begin{align}\label{Eqn3}
\|P^+_{\D}\|_{L^p} = \|P^+_{\D_1}\|_{L^p(\D_1)} \cdots \|P^+_{\D_m}\|_{L^p(\D_m)}.
\end{align}
\end{itemize} 
\end{corollary}

\begin{proof}
To prove i. we use the fact that the Bergman kernel of $\D$ is the product of 
the Bergman kernels of $\D_j$s together with Lemma \ref{LemKernelEst} to show that  
\[\sup\left\{\frac{|K_{\D}(w,z)|}{K_{\D}(z,z)}:z,w\in \D \right\}<\infty.\]
Hence, $\D$ satisfies property BR.  

ii. is simply is a consequence of Lemmas \ref{LemProdP+} and \ref{LemP+}.   
\end{proof}

\begin{proof}[Proof of Theorem \ref{Thm1}] 
The proof is very similar to the proof of Proposition \ref{Prop1} except here 
we use \eqref{Eqn3} in Corollary \ref{CorKernelEst}.
\end{proof}

\section*{Proof of Theorem \ref{ThmHartogs}}
Before we start the proof of Theorem \ref{ThmHartogs}, we note that $P^+_{\Ha}$ is $L^p$ 
regular for $4/3< p< 4$ (see proof of \cite[Theorem 1.2]{EdholmMcNeal16}) and 
\cite[Corollary 3.5]{EdholmMcNeal16}). However, we will show that the Berezin transform 
is not $L^2$ regular on the Bergman space on the Hartogs triangle. Therefore, 
$L^p$-regularity of the absolute Bergman projection is not a sufficient 
condition for the $L^p$-regularity of the Berezin transform. 

\begin{proof}[Proof of Theorem \ref{ThmHartogs}]
The set  $\{z_1^nz_2^m:m\geq 0,n+m\geq -1\}$ forms an orthogonal basis for $A^2(\Ha)$ 
(see, for example, \cite{ChakrabartiZeytuncu16}). Let $f_{\ep}(w)=|w_1|^{-2+2\ep}$ for $\ep>0$. 
Then $f_{\ep}\in L^2(\Ha)$ because when $\ep>0$, 
the following calculation holds  
\begin{align*} 
\|f_{\ep}\|^2_{L^2}=4\pi^2\int_0^1\int_0^{r_1}r_1^{-3+4\ep} r_2 dr_2dr_1  
=2\pi^2\int_0^1r_1^{-1+4\ep}dr_1=\frac{\pi^2}{2\ep}<\infty, 
\end{align*} 
and when $\ep\leq 0$, the integral diverges. 
We can write the Bergman kernel $K_{\Ha}$ in series form as follows
\[K_{\Ha}(w_1,w_2,z_1,z_2)=\sum_{m\geq 0,n+m\geq -1} a_{nm} w_1^nw_2^m\zb_1^n\zb_2^m\]
where 
\[a_{nm}=\frac{1}{\|z_1^nz_2^m\|^2_{L^2}} 
=\frac{1}{4\pi^2 \int_0^1\int_0^{r_1}r_1^{2n+1}r_2^{2m+1}dr_2dr_1}
=\frac{(m+1)(n+m+2)}{\pi^2}\]
for $m\geq 0$ and $n+m+1\geq 0$. Next we compute 
\begin{align*}
\langle f_{\ep}K_{\Ha}(.,z),K_{\Ha}(.,z)\rangle 
=&\sum_{m\geq 0,n+m\geq -1}  |a_{nm}|^2|z_1|^{2n}|z_2|^{2m}4 
\pi^2\int_0^1\int_0^{r_1} r_1^{2n-1+2\ep}r_2^{2m+1}dr_2dr_1\\
=&\sum_{m\geq 0,n+m\geq -1} |a_{nm}|^2|z_1|^{2n}|z_2|^{2m} 
\frac{\pi^2}{(m+1)(n+m+1+\ep)}.
\end{align*}
Using $k=n+m+1$ in the second equality below we get  
\begin{align*}
B_{\Ha}f_{\ep}(z)=&\frac{1}{\pi^2K_{\Ha}(z,z)}
\sum_{m\geq 0,n+m\geq -1}|z_1|^{2n}|z_2|^{2m} 
	\frac{(m+1)(n+m+2)^2}{n+m+1+\ep}\\ 
 =&\frac{1}{\pi^2K_{\Ha}(z,z)} \sum_{m,k=0}^{\infty}|z_1|^{2k-2m-2}|z_2|^{2m} 
	\frac{(m+1)(k+1)^2}{k+\ep}\\
=&\frac{1}{\pi^2|z_1|^2K_{\Ha}(z,z)} \left(\sum_{m=0}^{\infty}(m+1) 
	\frac{|z_2|^{2m}}{|z_1|^{2m}}\right) 
	\left( \sum_{k=0}^{\infty}\frac{(k+1)^2}{k+\ep}|z_1|^{2k}\right). 
\end{align*} 
Next we note that  
\begin{align}\label{Eqn4} 
\frac{1}{\pi^2|z_1|^2K_{\Ha}(z,z)} 
= \frac{(|z_1|^2-|z_2|^2)^2(1-|z_1|^2)^2}{|z_1|^4}
= \left(1-\left|\frac{z_2}{z_1}\right|^2\right)^2(1-|z_1|^2)^2
\end{align}  
and $\frac{1}{(1-x)^2}=\sum_{m=0}^{\infty}(m+1)x^m$ for $|x|<1$. Therefore, 
\[B_{\Ha}f_{\ep}(z)=(1-|z_1|^2)^2\sum_{k=0}^{\infty} 
	\frac{(k+1)^2}{k+\ep}|z_1|^{2k}.\]
Then 
\begin{align*}
\|B_{\Ha}f_{\ep}\|^2_{L^2} 
= & \int_{\Ha}|1-|z_1|^2|^4\left|\sum_{k=0}^{\infty} 
	\frac{(k+1)^2}{k+\ep}|z_1|^{2k}\right|^2dV(z)\\
\geq & \frac{1}{\ep^2} \int_{\Ha}|1-|z_1|^2|^4dV(z).
\end{align*} 
Then, we have $ \|f_{\ep}\|_{L^2} =\pi/\sqrt{2\ep}$ while 
$\|B_{\Ha}f_{\ep}\|_{L^2} \geq  \ep^{-1}\|(1-|z_1|^2)^2\|_{L^2}$ 
for $\ep>0$. Hence 
\[\frac{\|B_{\Ha}f_{\ep}\|_{L^2}}{\|f_{\ep}\|_{L^2}} \geq  \frac{\sqrt{2} \|(1-|z_1|^2)^2\|_{L^2}}{\pi \sqrt{\ep}}\] 
for $\ep>0$. Therefore, $B_{\Ha}$ is unbounded on $L^2(\Ha)$. 
We note that, by interpolation, we also conclude that $B_{\Ha}$ is unbounded on 
$L^p(\Ha)$ for $p\leq 2$.
\end{proof} 
\begin{remark}
Let $f(z)=z_1^{-1}\in A^2(\Ha)$. Then 
\[\langle f, k^{\Ha}_z\rangle=\frac{1}{z_1\sqrt{K_{\Ha}(z,z)}}.\]
Then  \eqref{Eqn4}, for $z_1=1/j$ and $z_2=0$, implies that 
\[|\langle f, k^{\Ha}_{(1/j,0)}\rangle|=\pi (1-j^{-2})\to \pi \text{ as } j\to\infty.\]
Therefore, $\{k^{\Ha}_{(1/j,0)}\}$ does not converge to 0 weakly.
\end{remark}

\section*{Acknowledgement}
We would like to thank the referee for helpful comments that has improved the paper.


\end{document}